\newdimen\plusheight
\def\+{\;\lower\plusheight\hbox{$+$}\;}  
\newdimen\minusheight
\def\-{\;\lower\minusheight\hbox{$-$}\;}
\newdimen\cdotsheight
\def\cds{\lower\cdotsheight\hbox{$\cdots$}}
\newcommand{\norm}[1]{\left\lVert#1\right\rVert}
\numberwithin{equation}{section}
\theoremstyle{plain}
\newtheorem{thm}{Theorem}[section]
\newtheorem{lem}[thm]{Lemma}
\newtheorem{cor}{Corollary}
\theoremstyle{definition}
\newtheorem{defn}{Definition}[section]
\newtheorem{exmp}{Example}[section]
\theoremstyle{remark}
\newtheorem{rem}{\bf{Remark}}
\newtheorem*{note}{\bf{Note}}
\numberwithin{equation}{section}
\begin{document}
\setcounter {page}{1}
\title{\bf{$I$}-divergence and \bf{$I^*$}-divergence in cone metric spaces}

\author{Amar Kumar Banerjee and Anirban Paul}

\address[A.K.Banerjee]{Department of Mathematics, The University of Burdwan, Golapbag, Burdwan-713104, West Bengal, India.}

\email{ akbanerjee1971@gmail.com, akbanerjee@math.buruniv.ac.in }

\address[A.Paul]{Department of Mathematics, The University of Burdwan, Golapbag, Burdwan-713104, West Bengal, India.}
         
\email{ paulanirban310@gmail.com, anirbanpaul310math@gmail.com}

\begin{abstract} In this paper we have studied the ideas of $I$-divergence and $I^*$-divergence of sequences in cone metric spaces. We have investigated the relationship between $I$\text-divergence and $I^*$-divergence and their equivalence under certain condition. Further we prove a decomposition theorem for $I$-convergent sequences in a cone normed space.
\end{abstract}
\maketitle
\author{}

\textbf{Key words and phrases:}
$I$-divergence, $I^*$-divergence, Cone metric spaces, Cone normed spaces, Decomposition theorem, $I$-convergence.   \\

\textbf {(2010) AMS subject classification :} 54A20, 40A35  \\
 
\section{Introduction}
The idea of statistical convergence of sequences of real number were introduced by Steinhaus \cite{q} and Fast \cite{f} as a generalizations of ordinary convergence of sequences of real numbers. Later many more works were done in this direction \cite{B, g, p}. In 2001, P. Kostryko et al \cite{h} introduced the idea of $I$-convergence of real sequences using the ideals of the set of natural numbers as a generalization of statistical convergence. Later in 2005, Lahiri and Das \cite{j} studied the same in a topological space and then many works were carried out in this direction \cite{2, 4, d, i}. In 2017, Banerjee and Mondal \cite{3} studied the same for double sequences in a topological space.\\ 

The concepts of divergent sequences of real numbers was generalized to statistically divergent sequences of real numbers by Macaj and Salat in \cite{l}. Recently Das and Ghosal in \cite{c} introduced the notion of $I$ and $I^*$-divergence of sequences in a metric space where the condition (AP) has been used as necessary and sufficient condition to prove the equivalence of $I$ and $I^*$-divergence.\\

In 2007, Nabiev et al \cite{m} established a decomposition theorem for $I$-convergent sequences in a linear metric space. The concept of cone metric spaces is a generalization of the notion of usual metric spaces where the distance between two points is given by an element of a Banach space endowed with suitable partial ordering. After the initial work on cone metric spaces space by Guang and Xian \cite{k}, a lot of work have been done on this structure. In this paper we proceed in a different direction and extend the notion of $I$ and $I^*$-divergence in a cone metric space.\\

The idea of cone normed spaces was given by M. Eshaghi Gordrji et al \cite{g*} which is a generalization of normed spaces. In section \ref{sec} we have proved the decomposition theorem of $I$-convergence  sequences in a cone normed space and have studied its consequences, where the methods of proof are not analogous as in the case of a metric space.
\section{Preliminaries}
The following definitions and notion will be needed.\\

\begin{defn} \cite{t} Let $X\neq\phi$. A family $I \subset 2^X$ of subsets of $X$ is said to be an ideal in $X$ provided the following conditions holds:\\
\\
$(i)\: A, B \in I \Rightarrow A\cup B\in I$\\
$(ii) \: \:A\in I, B\subseteq A \Rightarrow B\in I$
\end{defn}
Note that $\phi\in I$ follows from the condition(ii). An ideal $I$ is called nontrivial if $I \neq \left\{\phi\right\}$ and $X\not\in I$. $I$ is said to be admissible if $\left\{x\right\}\in I$ for each $x\in X$ \cite{t**}.
\begin{defn} \cite{t*} Let $X\neq\phi$. A non-empty family $F\subset 2^X$ is said to be a filter on $X$ if the following are satisfied,\\
$(i)\: \phi\not\in F $\\
$(ii)\: A, B \in F \:\Rightarrow \:A\cap B\in F $\\
$(iii)A\in F, A\subset B\: \Rightarrow \: B\in F$
\end{defn}
Clearly if $I$ is a nontrivial ideal then the family of sets $F(I)=\left\{M\subset X : \text{there exists}\: A\in I, M=X\setminus A\right\} $ is a filter in $X$. It is called the filter associated with the ideal $I$.

\begin{defn} \cite{h,t**} Let $I\subset 2^\mathbb{N}$ be a proper ideal in $\mathbb{N}$ and $(X,d)$ be a metric space. The sequence $\left\{x_n\right\}_{n\in \mathbb{N}}$ of elements of $X$ is said to be $I$-convergent to $x\in X$ if for each $\varepsilon>0$ the set $A(\varepsilon)=\left\{n\in \mathbb{N}:d(x_n,x)\geq \varepsilon\right\}$ belongs to $I$.
\end{defn}

\begin{defn} \cite{h}\label{1} An admissible ideal $I \subset 2^\mathbb{N}$ is said to satisfy the condition (AP) if for every countable family of mutual disjoint sets $\left\{A_1,A_2,A_3,\cdots\right\}$ belonging to $I$ there exists a countable family of sets $\left\{B_1,B_2,B_3,\cdots\right\}$ such that $A_j\Delta B_j$ is a finite set for each $j\in\mathbb{N}$ and $B=\displaystyle{\bigcup_{j=1}^{\infty}}B_j\in I$.
\end{defn}
Note that $B_j\in I$ for each $j\in \mathbb{N}$.\\

The concepts of $I^*$-convergence which is closely related to the $I$-convergence has been given in \cite{h} as follows:\\
\begin{defn} \cite{h} The sequence $\left\{x_n\right\}_{n\in\mathbb{N}}$ of elements of X is said to be $\text{I}^*$\text-convergent to $x\in X$ if and only if there exists a set $M\in F(I)$, $ M=\left\{m_1<m_2 \cdots <m_k\cdots \right\}\subseteq \mathbb{N}$ such that $\displaystyle{\lim_{k\to\infty}}d(x,x_{m_k})=0$.
\end{defn}

In \cite{h} it is seen that $I$ and $I^*$-convergence are equivalent for an admissible ideal with the property (AP).\\

Now we recall the following definitions.
\begin{defn} \cite{1} A sequence $\left\{x_n\right\}_{n\in \mathbb{N}}$ in a metric space $(X,d)$ is said to be divergent (or properly divergent) if there exists an element $x \in X$ such that $d(x,x_n)\to \infty$ as $n\to\infty$.\\

\end{defn}
Note that a divergent sequence in a metric spaces can not have any convergent subsequence.
\begin{defn} \cite{c} A sequence $\left\{x_n\right\}_{n\in \mathbb{N}}$ in  a metric space $(X,d)$ is said to be $I$\text-divergent if there exists an element $x\in X$ such that for any positive real number G, $A(x,G)=\left\{n \in \mathbb{N}:d(x,x_n)\leq G\right\}\in I$
\end{defn}
\begin{defn} \cite{c} A sequence in a metric space $(X,d)$ is said to be $I^*$-divergent if there exists $M\in F(I)$ i,e $\mathbb{N}\setminus M\in I$ such that $\left\{x_n\right\}_{n\in M}$ is divergent. That is there exists at least one $x\in X$ such that $\displaystyle{\lim_{n\to\infty}}d(x,x_n)=\infty$
\end{defn}
We now state the decomposition theorem from \cite{m} for $I$-convergent sequences.\\

\begin{thm} \cite{m}
Let $(X,d)$ be a linear metric space, $x=(x_n)\in X$ and $I\subset 2^\mathbb{N}$ be an admissible ideal with the property (AP). Then the following are equivalent:\\
(a)\: $\text{I}\text -lim x_n=\xi$\\
(b)\: There exists $y=(y_n)\in X$ and $z=(z_n)\in X$ such that $x=y + z $, $\displaystyle{\lim_{n\to\infty}}d(y_n,\xi)=0$ and $\text{supp z}\in I$, where $\text{supp z}=\left\{n\in \mathbb{N}:z_n\neq \theta_X\right\}$ and $\theta_X$ is the zero element of $X$.

\end{thm}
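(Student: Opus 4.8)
The plan is to establish the two implications separately. The implication $(b)\Rightarrow(a)$ is the routine half and does not even use (AP): assuming $x=y+z$ with $d(y_n,\xi)\to 0$ and $S:=\operatorname{supp}z\in I$, fix $\varepsilon>0$; for $n\notin S$ one has $z_n=\theta_X$, hence $x_n=y_n$ and $d(x_n,\xi)=d(y_n,\xi)$, so
\[
\{n:d(x_n,\xi)\ge\varepsilon\}\subseteq S\cup\{n:d(y_n,\xi)\ge\varepsilon\}.
\]
The second set is finite because $y_n\to\xi$, and finite sets belong to the admissible ideal $I$; since $I$ is closed under finite unions, the left-hand set lies in $I$, i.e. $I\text{-}\lim x_n=\xi$.

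For $(a)\Rightarrow(b)$ I would extract the standard nested family $A_k:=\{n:d(x_n,\xi)\ge 1/k\}$, $k\in\mathbb{N}$; by hypothesis each $A_k\in I$ and $A_1\subseteq A_2\subseteq\cdots$. Since (AP) is phrased for pairwise disjoint families, I would first disjointify: set $D_1=A_1$ and $D_k=A_k\setminus A_{k-1}$ for $k\ge2$, so the $D_k$ are mutually disjoint and $D_k\subseteq A_k\in I$. Applying (AP) to $\{D_k\}_{k\in\mathbb{N}}$ yields sets $\{B_k\}_{k\in\mathbb{N}}$ with $D_k\Delta B_k$ finite for every $k$ and $B:=\bigcup_k B_k\in I$. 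Put $M:=\mathbb{N}\setminus B\in F(I)$. The key point is then that $d(x_n,\xi)\to 0$ as $n\to\infty$ \emph{along} $M$: given $\varepsilon>0$ choose $k$ with $1/k<\varepsilon$; from $A_k=\bigcup_{j=1}^k D_j$ and $D_j\subseteq B_j\cup(D_j\setminus B_j)\subseteq B\cup F_j$ with $F_j$ finite, one gets $A_k\subseteq B\cup F$ with $F:=\bigcup_{j=1}^k F_j$ finite, whence $A_k\cap M\subseteq F$ is finite and $d(x_n,\xi)<1/k<\varepsilon$ for all large $n\in M$.

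To finish I would use the linear structure of $X$ to split $x$: define $y_n=x_n$, $z_n=\theta_X$ for $n\in M$, and $y_n=\xi$, $z_n=x_n-\xi$ for $n\in B$. Then $x=y+z$, $\operatorname{supp}z\subseteq B\in I$, and $d(y_n,\xi)=0$ for $n\in B$ while $d(y_n,\xi)=d(x_n,\xi)\to 0$ along $M$, so $d(y_n,\xi)\to 0$ as $n\to\infty$; this is exactly (b). I expect the main obstacle to be the bookkeeping in the middle paragraph — converting the nested sets $A_k$ into a disjoint family, keeping track of the finite symmetric-difference sets $F_j$, and checking that the single set $B\in I$ produced by (AP) simultaneously controls every $A_k$ up to a finite set. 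This is precisely where (AP) is indispensable, and it is the step one must get exactly right.
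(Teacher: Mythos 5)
Your proposal is correct and takes essentially the same route as the paper's proof of its cone-normed analogue (Theorem \ref{thDe}): both hinge on using (AP) to produce a set $M\in F(I)$ along which $x_n\to\xi$ and then setting $y_n=x_n$ on $M$, $y_n=\xi$ off $M$, and $z=x-y$. The only differences are presentational --- you reprove the (AP)-based passage from $I$- to $I^*$-convergence inline via the disjointified level sets $D_k=A_k\setminus A_{k-1}$ where the paper invokes that equivalence as a known result, and you argue $(b)\Rightarrow(a)$ directly rather than through $I^*$-convergence, correctly noting (as in Remark \ref{rem2}) that (AP) is not needed for that direction.
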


We now recall the following basic concepts from \cite{k} which will be needed throughout the paper. $E$ will always denote a real Banach space with null vector $\theta_E$ and P be a subsets of $E$. P is called cone if and only if \\
(i) P is closed and $P\neq \left\{\theta_E\right\}$\\
(ii) $a, b\in \mathbb{R} \text{,} \: a, b\geq 0 \text{,} \:x,y\in P \Rightarrow ax+by \in \text{P}$\\
(iii) $x\in \text{P}$ and $-x\in \text{P}$ $\Rightarrow$ $x=\theta_E$\\

Given  cone $\text{P}\subset E$, we define a partial ordering $\leq$ with respect to P by $x\leq y$ if and only if $y -x \in \text{P}$. We shall write $x < y$ to indicate that $x\leq y$ but $x\neq y$, while $x<< y$ will stand for $y - x \in \text{Int P}$, Int P denotes the interior of P.\\

The cone P is called normal if there is a number $k>0$ such that for all $x, y \in E$\\
$\theta_E\leq x \leq y$ implies $\norm x \leq k\norm y$.\\
The least positive numbers satisfying above is called the normal constant of P.\\
The cone is called regular if every increasing sequence which is bounded from above is convergent. That is if $\left\{x_n\right\}$ is a sequence such that $$x_1\leq x_2\leq \cdots \leq x_n\leq \cdots \leq y$$ for some $y \in E$, there is $x\in E$ such that $\norm {x_n - x }\to 0 \: (n \to \infty)$. Equivalently the cone P is regular if and only if every decreasing sequence which is bounded from below is convergent. It is well known that a regular cone is a normal cone.\\

In the following we always suppose $E$ is a real Banach space, with null element $\theta_E$, P is a cone with $\text{Int P} \neq \phi$ and $\leq$ is a partial ordering with respect to P.\\
\begin{defn} \cite{k} Let $X$ be a non-empty set. Suppose the mapping $d : X\times X \mapsto E $ satisfies\\
(d1) $\theta_E< d(x,y)$ for all $x, y \in X$ and $d(x,y)=\theta_E$ \:if and only if\: $x=y$\\
(d2) $d(x , y)=d(y , x)$ for all $ x, y \in X$\\
(d3) $d(x , y)\leq d(x , z) + d(y , z) $ for all $x, y, z \in X $.\\
Then $d$ is called a cone metric and $(X,d)$ is called a cone metric space. It is obvious that cone metric spaces generalize metric spaces. Several examples of cone metric space are seen in \cite{k}.
\end{defn}

\begin{defn} \cite{k} Let $(X,d)$ be a cone metric space. Let $\left\{x_n\right\}$ be a sequence in $X$ and $x\in X$. If for every $c\in E$ with $\theta_E<<c$ there is $N$ such that for every $n>N$, $d(x,x_n)<< c$, then $\left\{x_n\right\}$ is said to be convergent and $\left\{x_n\right\}$  converges to $x$ and $x$ is the limit of $\left\{x_n\right\}$ . We denote this by $\displaystyle{\lim_{n\to\infty}}x_n=x$ or $x_n\to x \: (n\to\infty)$
\end{defn}
\begin{lem} \cite{k} Let $(X,d)$ be a cone metric space. P be a normal cone with normal constant $k$. Let $\left\{x_n\right\}$ be a sequence in $X$. Then $\left\{x_n\right\}$ converges to $x$ if and only if $d(x,x_n)\to \theta_E \: (n\to\infty)$.
\end{lem}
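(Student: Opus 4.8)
The plan is to prove both implications directly from the definition of convergence in a cone metric space, using normality of $P$ for one direction and the hypothesis $\operatorname{Int} P \neq \phi$ for the other. First I would record two elementary topological facts about $\operatorname{Int} P$. (1) For every $c \in \operatorname{Int} P$ and every real $\delta > 0$ we have $\delta c \in \operatorname{Int} P$: if $B(c,r) \subseteq P$ then $B(\delta c,\delta r) = \delta\, B(c,r) \subseteq \delta P = P$, since $P$ is closed under multiplication by positive scalars. (2) For every $c \in \operatorname{Int} P$ there is $\delta > 0$ such that $\norm{a} < \delta$ implies $c - a \in \operatorname{Int} P$, i.e. $a << c$; this is immediate because $\operatorname{Int} P$ is open, so some ball $B(c,\delta)$ lies inside it, and $\norm{(c-a)-c} = \norm{a} < \delta$.

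For the forward implication, assume $x_n \to x$ and fix $\varepsilon > 0$. Pick any $c_0 \in \operatorname{Int} P$ and choose $\delta > 0$ with $k\,\delta\,\norm{c_0} < \varepsilon$; by fact (1), $c := \delta c_0 \in \operatorname{Int} P$, so $\theta_E << c$. By the definition of convergence there is $N$ such that $d(x,x_n) << c$ for all $n > N$. Together with $\theta_E \leq d(x,x_n)$ (from (d1)) this gives $\theta_E \leq d(x,x_n) \leq c$, and normality yields $\norm{d(x,x_n)} \leq k\norm{c} = k\,\delta\,\norm{c_0} < \varepsilon$. Hence $\norm{d(x,x_n)} \to 0$, that is, $d(x,x_n) \to \theta_E$ in $E$.

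For the converse, assume $d(x,x_n) \to \theta_E$, i.e. $\norm{d(x,x_n)} \to 0$, and let $c$ be arbitrary with $\theta_E << c$. By fact (2) choose $\delta > 0$ so that $\norm{a} < \delta$ forces $a << c$. Pick $N$ with $\norm{d(x,x_n)} < \delta$ for all $n > N$; then $d(x,x_n) << c$ for all such $n$. Since $c \in \operatorname{Int} P$ was arbitrary, this is precisely the statement $x_n \to x$.

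I expect the only genuine subtlety to be the two facts about $\operatorname{Int} P$ recorded at the start — in particular the scaling-invariance of the interior and the "absorbing ball" property — after which each direction is a short argument. Normality is used only in the forward direction, and its role is exactly to convert the order estimate $d(x,x_n) \leq c$ into the norm estimate that witnesses convergence of $d(x,x_n)$ to $\theta_E$; without it the forward implication can fail.
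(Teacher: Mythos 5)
The paper states this lemma without proof, merely citing it from the reference on cone metric spaces, so there is no in-paper argument to compare against. Your proof is correct and is essentially the standard one from that source: the two preliminary facts about $\operatorname{Int}P$ (positive scaling invariance, which is in fact already recorded in the paper as a cited theorem, and the open-ball absorption property) are both justified properly, normality is used exactly where it is needed in the forward direction, and the converse correctly uses only openness of $\operatorname{Int}P$. The only cosmetic point is that you should note $\theta_E\notin\operatorname{Int}P$ (so $\norm{c_0}>0$) when choosing $\delta$ with $k\,\delta\,\norm{c_0}<\varepsilon$, which holds for any proper cone with nonempty interior.
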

We now consider the following definitions. \\

\begin{defn} \cite{s} Let $X$ be a real vector space . Suppose that the mapping $\norm . _\text{P} : X\mapsto E $ be such that 
\begin{align*}
&(i)\norm {.}_\text{P} > \theta_E\: \text{for all} \: x\in X \text{and} \norm . _\text{P}= \theta_E \:\text{if and only if} \: x=\theta_X\\
&(ii) \norm {\alpha x}_\text{P} = |\alpha|\norm{x}_{\text{P}} \text{for all} \:x\in X\text{ and}\: \alpha \in \mathbb{R}\\
&(iii) \norm {x + y}_{\text{P}} \leq \norm{x}_{\text{P}} + \norm{y}_{\text{P}}, \text{for all }x, y\in X.
\end{align*}

Then $\norm{.}_{\text{P}}$ is called a cone norm on $X$ and $(\norm{.}_{\text{P}}, X)$ is called cone normed space.\\
\end{defn}
It is easy to show that every normed space is cone normed space by putting $E=\mathbb{R}$, $\text{P}=[0,\infty)$.\\
\begin{rem} \cite{s} Let $(X,\norm {.}_{\text{P}})$ be a cone normed space, set $d(x , y) =\norm {x - y}_{\text{P}}$, it is easy to show that $(X,d)$ is a cone metric space, $d$ is called `` the cone metric induced by the cone norm $\norm {.}_{\text{P}}$".
\end{rem}
Now we consider the following theorem as in \cite{s}.
\begin{thm} \cite{s} The cone metric $d$ induced by a cone norm on a cone normed space satisfies \\
(i) $d(x+a,y+a) = d(x,y), \: x, y, a\in X$\\
(ii) $d(\alpha x, \alpha y )=|\alpha | d(x , y), \: x, y\in X,\: \alpha \in \mathbb{R}$

\end{thm}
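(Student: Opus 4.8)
The plan is simply to unfold the definition of the induced cone metric $d(x,y)=\norm{x-y}_{\text{P}}$ recorded in the remark preceding the statement, and then to apply the cone-norm axioms; both assertions drop out as one-line identities, so this is a routine verification rather than a result with a real obstacle.

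For (i), I would compute
\[
d(x+a,y+a)=\norm{(x+a)-(y+a)}_{\text{P}}=\norm{x-y}_{\text{P}}=d(x,y),
\]
where the middle equality uses only the cancellation $(x+a)-(y+a)=x-y$ valid in any real vector space; no property of the cone norm itself is invoked beyond the fact that $\norm{\cdot}_{\text{P}}$ is a well-defined map on $X$.

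For (ii), I would write
\[
d(\alpha x,\alpha y)=\norm{\alpha x-\alpha y}_{\text{P}}=\norm{\alpha(x-y)}_{\text{P}},
\]
using distributivity of scalar multiplication over subtraction in $X$, and then invoke axiom (ii) of the definition of a cone norm, namely $\norm{\alpha u}_{\text{P}}=|\alpha|\,\norm{u}_{\text{P}}$ with $u=x-y$, to conclude that this equals $|\alpha|\,\norm{x-y}_{\text{P}}=|\alpha|\,d(x,y)$.

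The only point meriting any care is that the symbol $d$ on the left-hand sides is legitimate, i.e. that $d(x,y)=\norm{x-y}_{\text{P}}$ genuinely satisfies (d1)--(d3) and hence is a cone metric; but this is exactly the content of the remark just above the statement and may be taken as given, so nothing further is required.
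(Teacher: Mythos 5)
Your proof is correct and is the standard verification: the paper itself states this theorem without proof, merely quoting it from the cited source, and the argument there is exactly the one you give, namely unfolding $d(x,y)=\norm{x-y}_{\text{P}}$ and using cancellation for (i) and the homogeneity axiom of the cone norm for (ii). Nothing further is needed.
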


Now we consider the following definition from \cite{n}.\\
\begin{defn} \cite{n} Let $(X, d)$ be a cone metric space. Let $\left\{x_n\right\}_{n\in \mathbb{N}}$ be a sequence in $X$ and let $x \in X$. If for every $c\in E$ with $\theta_E << c$, the set $\left\{ n\in \mathbb{N} : c - d(x,x_n)\not\in \text{Int P}\right\} \in I$ then $\left\{x_n\right\}_{n\in \mathbb{N}}$ is said to be $I$-convergent to $x$ and we write $I\text-\displaystyle{\lim _n} x_n=x$
\end{defn}
\begin{defn} \cite{n} A sequence  $\left\{x_n\right\}_{n\in \mathbb{N}}$ in $X$ is said to be $I^*$-convergent to $x\in X$ if and only if there exists a set $M \in F(I), M=\left\{m_1<m_2<\cdots <m_k<\cdots\right\}$ such that $\displaystyle{\lim_{k\to \infty}}x_{m_k}= x$ i,e for every $c\in E$ with $\theta_E<<c$, there exists $p\in \mathbb{N}$ such that $ c - d(x,x_{m_k})\in \text{Int P}$ for all $k\geq p$. 
\end{defn}

It is known \cite{r} that any cone metric space is first countable Hausdorff topological space with the topology induced by the open balls defined as usual for each element $z$ in $X$ and for each element $c$ in Int P. So as in \cite{j} it can be seen that $I^*$-convergence always implies $I$-convergence but the converse is not true. The two concepts are equivalent if and only if the ideal $I$ has the condition (AP).\\

Now we consider the following theorems as in  \cite{5}.
\begin{thm} \cite{5}
Let $E$ be a real Banach space with cone P. If $x_0\in \text{Int P}$ and $c(>0)\in \mathbb{R}$ then $c x_0\in \text{Int P}$.
\end{thm}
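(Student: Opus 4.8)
The plan is to exploit that, for a fixed real $c>0$, the dilation $x\mapsto cx$ is a homeomorphism of $E$ onto itself, hence carries interiors to interiors, combined with the trivial identity $c\,\text{P}=\text{P}$ valid for every $c>0$.

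First I would verify $c\,\text{P}=\text{P}$. If $x\in\text{P}$ then $cx\in\text{P}$ by condition (ii) in the definition of a cone (closure of $\text{P}$ under multiplication by nonnegative reals), so $c\,\text{P}\subseteq\text{P}$; applying the same fact with the scalar $1/c>0$ gives $\frac1c\,\text{P}\subseteq\text{P}$, i.e.\ $\text{P}\subseteq c\,\text{P}$. Hence $c\,\text{P}=\text{P}$. Then I would argue directly with open balls: since $x_0\in\text{Int P}$, choose $r>0$ with $B(x_0,r)\subseteq\text{P}$; if $\norm{y-cx_0}<cr$ then $\norm{\frac1c y-x_0}<r$, so $\frac1c y\in B(x_0,r)\subseteq\text{P}$, whence $y=c\bigl(\tfrac1c y\bigr)\in c\,\text{P}=\text{P}$. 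Thus $B(cx_0,cr)\subseteq\text{P}$, so $cx_0\in\text{Int P}$, as required.

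There is essentially no obstacle here: the only line needing a word of care is the identity $cB(x_0,r)=B(cx_0,cr)$, which is immediate from the absolute homogeneity of the norm, together with $c\,\text{P}=\text{P}$. Equivalently one may phrase the whole argument abstractly: $\varphi_c\colon E\to E$, $\varphi_c(x)=cx$, is a linear bijection with continuous inverse $\varphi_{1/c}$, hence a homeomorphism, so $c\,\text{Int P}=\varphi_c(\text{Int P})=\text{Int}\,\varphi_c(\text{P})=\text{Int}(c\,\text{P})=\text{Int P}$, and in particular $cx_0\in\text{Int P}$.
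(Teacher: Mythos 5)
Your proof is correct. Note that the paper itself states this result only as a citation from reference [5] and gives no proof, so there is nothing to compare against line by line; your argument --- that $c\,\text{P}=\text{P}$ for $c>0$ by the cone axiom (ii) applied to $c$ and to $1/c$, and that the dilation $x\mapsto cx$ maps the ball $B(x_0,r)\subseteq\text{P}$ onto $B(cx_0,cr)\subseteq\text{P}$ --- is the standard one and is complete.
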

\begin{thm} \cite{5}
Let $E$ be a real Banach space and P be a cone in $E$, if $x_0\in \text{P}$ and $y_0\in \text{Int P}$ then $ x_0 + y_0 \in \text{Int P}$
\end{thm}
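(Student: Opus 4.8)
The plan is to argue directly from the topological definition of the interior together with the additive closure condition of the cone $P$. Since $y_0 \in \text{Int P}$, there is a radius $r > 0$ such that the open ball $B(y_0,r) = \{\, w \in E : \norm{w - y_0} < r \,\}$ is contained in $P$. I will show that the translated ball $B(x_0 + y_0, r)$ is then also contained in $P$, which immediately forces $x_0 + y_0$ to be an interior point of $P$, i.e. $x_0 + y_0 \in \text{Int P}$.

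For the translation step, I would fix an arbitrary $z \in B(x_0 + y_0, r)$, so that $\norm{z - (x_0 + y_0)} < r$. Rewriting the left-hand side as $\norm{(z - x_0) - y_0}$ shows $z - x_0 \in B(y_0, r) \subseteq P$. Now both $x_0 \in P$ and $z - x_0 \in P$, so property (ii) in the definition of a cone, applied with the scalars $a = b = 1$, yields $z = x_0 + (z - x_0) \in P$. Since $z \in B(x_0 + y_0, r)$ was arbitrary, $B(x_0 + y_0, r) \subseteq P$, and hence $x_0 + y_0 \in \text{Int P}$.

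I do not anticipate any real obstacle: the argument uses only the vector-space operations, the convexity/additivity condition (ii) of a cone, and the translation invariance of the norm, so it is essentially two lines long and in fact works verbatim in any topological vector space. The single point worth stating carefully is the identity $\norm{z - (x_0 + y_0)} = \norm{(z - x_0) - y_0}$, which is exactly what makes the open ball around $y_0$ transport to an open ball of the same radius around $x_0 + y_0$.
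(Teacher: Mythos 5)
Your proof is correct: the paper states this result as a citation from \cite{5} without reproducing a proof, and your argument --- translating the ball $B(y_0,r)\subseteq P$ by $x_0$ and using cone property (ii) with $a=b=1$ to keep the translated ball inside $P$ --- is the standard and expected one. No gaps; the key identity $\norm{z-(x_0+y_0)}=\norm{(z-x_0)-y_0}$ is handled correctly.
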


\section{$I$-divergence and $I^*$-divergence}

Throughout the paper $X$ stands for a cone metric space with normal cone P and the cone metric $d : X\times X \mapsto E$, $\theta_E$ stands for the null vector of $E$, $I$ for nontrivial admissible ideal of $\mathbb{N}$, the set of natural numbers unless otherwise stated.

\begin{defn} A sequence $\left\{x_n\right\}_{n\in \mathbb{N}}$ in a cone metric space $(X,d)$ is said to be divergent (or properly divergent) if there exists an element $x\in X$ such that for any $c \in E$ with $\theta_E<< c$ there exists $n_0 \in \mathbb{N}$ such that $d(x,x_n) - c\in \:\text{Int P}$ for all $n\geq n_0$.
\end{defn}
\begin{exmp}Let $E=\mathbb{R}^{2}$ with usual norm, $\text{P}=\left\{(x,y)\in E : x,y \geq 0\right\}\subset \mathbb{R}^{2}$. $X=\mathbb{R}^{2}$ and $d: X \times X \mapsto E$ such that $d(x,y )=( |x_1 - y_1 | , |x_2 - y_2| )$, where $x=(x_1 ,x_2)$, $y=(y_1,y_2)$. Let us define a sequence $\left\{x_n\right\}_{n\in \mathbb{N}}$ as follows $x_n=(n,n)$. Then there exists $x=(0,0)$ such that for any $c\in E $ with $\theta_E<<c$ there exists $n_0=\text{max}\left\{c_1, c_2\right\} + 1 \in \mathbb{N}$, where $c=(c_1, c_2)$ such that $d(x , x_n) - c =(n,n) -c \in \:\text{Int P}$ for all $n\geq n_0$. 
\end{exmp}
Now we have the following theorem related to divergent sequence in a cone metric space.\\
\begin{thm} A divergent sequence can not have a convergent subsequence in a cone metric space.
\end{thm}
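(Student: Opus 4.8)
The plan is to argue by contradiction, playing the triangle inequality (d3) off against the divergence condition. Suppose $\{x_n\}_{n\in\mathbb{N}}$ is divergent, witnessed by a point $x\in X$, and that it possesses a subsequence $\{x_{n_k}\}$ converging to some $y\in X$ (we need not assume $x=y$ or $x\neq y$). The two hypotheses pull $d(x,x_{n_k})$ in opposite directions: divergence forces $d(x,x_{n_k})-c\in\text{Int}\,\mathrm P$ eventually for every prescribed $c\gg\theta_E$, while convergence forces $d(y,x_{n_k})$ to become order-small; since $d(x,x_{n_k})\le d(x,y)+d(y,x_{n_k})$ by the triangle inequality, $d(x,x_{n_k})$ cannot ``run off to infinity'' along the subsequence, and the proof turns this heuristic into a contradiction.

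Concretely, I would first manufacture a threshold adapted to the two points. Since $\text{Int}\,\mathrm P\neq\phi$, fix $c_1$ with $\theta_E\ll c_1$ and put $c_0:=d(x,y)+c_1$. As $d(x,y)\in\mathrm P$ by (d1) and $c_1\in\text{Int}\,\mathrm P$, the stated fact that $x_0\in\mathrm P$ and $y_0\in\text{Int}\,\mathrm P$ imply $x_0+y_0\in\text{Int}\,\mathrm P$ gives $\theta_E\ll c_0$. Applying divergence with this $c_0$ yields $n_0$ with $d(x,x_n)-c_0\in\text{Int}\,\mathrm P$ for all $n\ge n_0$; applying convergence of $\{x_{n_k}\}$ to $y$ with $c_1$ yields $K$ with $c_1-d(y,x_{n_k})\in\text{Int}\,\mathrm P$ for all $k>K$. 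Fix $k$ with $k>K$ and $n_k\ge n_0$, and add the two memberships
\[
d(x,x_{n_k})-d(x,y)-c_1\in\text{Int}\,\mathrm P,\qquad c_1-d(y,x_{n_k})\in\text{Int}\,\mathrm P,
\]
using $\text{Int}\,\mathrm P+\text{Int}\,\mathrm P\subseteq\text{Int}\,\mathrm P$ (a consequence of the same fact, since $\text{Int}\,\mathrm P\subseteq\mathrm P$) to obtain $v:=d(x,x_{n_k})-d(x,y)-d(y,x_{n_k})\in\text{Int}\,\mathrm P$. But (d2) and (d3) give $-v=d(x,y)+d(y,x_{n_k})-d(x,x_{n_k})\in\mathrm P$, so both $v\in\mathrm P$ and $-v\in\mathrm P$; cone axiom (iii) then forces $v=\theta_E$, whence $\theta_E\in\text{Int}\,\mathrm P$. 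This is impossible, since it would make every $w\in E$ satisfy $w,-w\in\mathrm P$, forcing $E=\{\theta_E\}=\mathrm P$ and contradicting $\mathrm P\neq\{\theta_E\}$.

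The main obstacle is not any single cone computation — that sums of interior (resp.\ positive) elements remain interior, and the antisymmetry axiom (iii), are routine — but the choice of the threshold $c_0=d(x,y)+c_1$, which is exactly what lets the divergence estimate ``against $c_0$'' and the convergence estimate ``against $c_1$'' cancel through the triangle inequality; an arbitrary $c_0$ would not close the argument. Everything else is bookkeeping, and normality of $\mathrm P$ is not needed.
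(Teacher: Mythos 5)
Your proof is correct, and it is a strictly more careful version of the paper's own argument. The paper also argues by contradiction and reaches the same impossibility $\theta_E\in\text{Int P}$ (via an element $v$ with $v\in\text{Int P}$ and $-v\in\text{P}$, forcing $v=\theta_E$ by cone axiom (iii)), but it reuses the letter $x$ for both the divergence witness and the subsequential limit, so as written it only treats the case where the subsequence converges to the very point $x$ that witnesses divergence; in that special case one can test both conditions against the same $c$ and no triangle inequality is needed. Your version handles an arbitrary limit $y$ by shifting the divergence threshold to $c_0=d(x,y)+c_1$ (legitimately in $\text{Int P}$ since $\text{P}+\text{Int P}\subseteq\text{Int P}$) and cancelling through (d3), which is exactly the extra step required to make the statement hold in full generality; you also correctly note that normality of P is never used. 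The only quibble is at the very end: rather than deducing $E=\{\theta_E\}$, it is cleaner to say that $\theta_E\in\text{Int P}$ gives a ball about $\theta_E$ inside P, so for every $w\in E$ suitable positive scalings put both $w$ and $-w$ in P, whence axiom (iii) forces $E=\{\theta_E\}$ and hence $\text{P}=\{\theta_E\}$, contradicting the standing assumption $\text{P}\neq\{\theta_E\}$ --- which is what you say, so this is only a matter of presentation.
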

\begin{proof}
Let $\left\{x_n\right\}_{n\in \mathbb{N}}$ be a sequence in a cone metric space which is divergent. Let $c\in E$ with $\theta_E<< c$. Since  $\left\{x_n\right\}_{n\in \mathbb{N}}$ is divergent there exists an element $x\in X$ such that for this $c\in E$ there exists $n_0\in \mathbb{N}$ such that $d(x,x_n) - c \in \:\text{Int P}$ when $n\geq n_0$. Now if possible let $\left\{x_{n_k}\right\}_{k\in \mathbb{N}}$ be a subsequence of  $\left\{x_n\right\}_{n\in \mathbb{N}}$ converging to some $x \in X$. Then for this $c\in E$ we have a $p \in \mathbb{N}$ such that $d(x,x_{n_k})<< c$ for all $n_k \geq n_p$. Let r = max $\left\{n_0, n_p\right\}$. Then $d(x,x_r) - c \in \: \text{Int P}$ and $ c - d(x,x_r)\in \: \text{Int P}$ i,e $-(d(x,x_r) - c)\in \: \text{Int P}$. So $\theta_E \in \: \text{Int P}$, which is a contradiction. Hence the result follows.
\end{proof}
\begin{defn} A sequence  $\left\{x_n\right\}_{n\in \mathbb{N}}$ in a cone metric space $(X,d)$ is said to be $I$-divergent if there exists an element $x\in X$ such that for any $c\in E$ with $\theta_E<< c$, $A(x,c)=\left\{ n\in \mathbb{N} : c -d(x,x_n)\in \:\text{Int P}\right\} \\
\in I$.
\end{defn}
\begin{defn} A sequence  $\left\{x_n\right\}_{n\in \mathbb{N}}$ in a cone metric space $(X,d)$ is said to be $I^*$-divergent if there exists an $M\in F(I)$ (i,e $\mathbb{N}\setminus M \in I$ ) such that  $\left\{x_n\right\}_{n\in \mathbb{N}}$ is divergent i,e there exists an element $x\in X$ such that for any $c\in E$ with $\theta_E<< c$ there exists $n_0\in \mathbb{N}$ such that $d(x,x_n) - c\in \:\text{Int P}$ $\forall n \geq n_0, n\in M$.
\end{defn}
\begin{thm}\label{th2} Let $I$ be an admissible ideal. If $\left\{x_n\right\}_{n\in \mathbb{N}}$ is $I^*$ -divergent then  $\left\{x_n\right\}_{n\in \mathbb{N}}$ is $I$-divergent.
\end{thm}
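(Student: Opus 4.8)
The plan is to keep the \emph{same} witness point $x\in X$ supplied by $I^*$-divergence and to show it also witnesses $I$-divergence, by bounding the set $A(x,c)$ for an arbitrary $c\in E$ with $\theta_E<<c$ from above by a member of $I$.

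First I would unpack the hypothesis: since $\left\{x_n\right\}_{n\in\mathbb{N}}$ is $I^*$-divergent there is $M\in F(I)$ (equivalently $\mathbb{N}\setminus M\in I$) and $x\in X$ such that, for the chosen $c$, there exists $n_0\in\mathbb{N}$ with $d(x,x_n)-c\in\text{Int P}$ for every $n\in M$ with $n\geq n_0$. Next I would argue that such an index cannot belong to $A(x,c)$: if $n\in M$, $n\geq n_0$ and simultaneously $c-d(x,x_n)\in\text{Int P}$, then both $d(x,x_n)-c$ and its negative lie in $\text{Int P}$, so applying the theorem that $x_0\in\text{P}$ and $y_0\in\text{Int P}$ imply $x_0+y_0\in\text{Int P}$ (with $x_0=d(x,x_n)-c$ and $y_0=c-d(x,x_n)$) yields $\theta_E\in\text{Int P}$, which is impossible because $\text{P}$ is a proper cone; this is exactly the mechanism already used in the proof that a divergent sequence has no convergent subsequence. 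Hence $A(x,c)\cap M\subseteq\left\{n\in\mathbb{N}:n<n_0\right\}$.

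It then follows that $A(x,c)\subseteq(\mathbb{N}\setminus M)\cup\left\{1,2,\dots,n_0-1\right\}$. The first set is in $I$ by hypothesis, and the second is finite, hence in $I$ since $I$ is admissible (each singleton lies in $I$ and $I$ is closed under finite unions); therefore their union is in $I$, and by the hereditary property $(ii)$ of an ideal, $A(x,c)\in I$. As $c$ was arbitrary, $x$ witnesses that $\left\{x_n\right\}_{n\in\mathbb{N}}$ is $I$-divergent, which completes the argument.

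I do not anticipate a substantial obstacle: the only delicate point is the observation that $\theta_E\notin\text{Int P}$ (so that a vector and its negative cannot both be interior points of $\text{P}$), which is handled by the cited theorems on interiors of cones, together with the routine bookkeeping that both $\mathbb{N}\setminus M$ and the initial segment $\left\{n:n<n_0\right\}$ sit inside $I$.
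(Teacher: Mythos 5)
Your proposal is correct and follows essentially the same route as the paper: fix the witness $x$ and the set $M\in F(I)$ from $I^*$-divergence, show $A(x,c)\subseteq(\mathbb{N}\setminus M)\cup\{1,\dots,n_0\}$, and conclude by admissibility and the hereditary property of the ideal. The only difference is that you explicitly justify why an index $n\in M$ with $n\geq n_0$ cannot lie in $A(x,c)$ (via the $\theta_E\notin\operatorname{Int}\mathrm{P}$ argument), a step the paper asserts without comment.
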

\begin{proof}
Since  $\left\{x_n\right\}_{n\in \mathbb{N}}$ is $I^*$-divergent, there exists an $M\in F(I) \:  (i,e \mathbb{N}\setminus M\in I)$ such that  $\left\{x_n\right\}_{n\in M}$ is divergent. That is there exists an $x\in X$ such that for any $c\in E$ with $\theta_E<< c$ $\exists$ \: $n_0\in \mathbb{N}$ such that $d(x,x_n) - c\in \:\text{Int P}$ for all $n\geq n_0 $,$n\in M$. Now the set $A(x,c)=\left\{ n\in \mathbb{N}: c - d(x,x_n)\in \: \text{Int P}\right\}\subset (\mathbb{N}\setminus M)  \cup \left\{1,2,\cdots, n_0\right\}$. Since $I$ is an admissible ideal the set $\mathbb{N}\setminus M \cup\left\{1,2,\cdots,n_0\right\}\in I$. So $A(x,c)\in I$. This implies that $\left\{x_n\right\}_{n\in \mathbb{N}}$ is $I$-divergent.
\end{proof}
The converse of the above theorem is not always true. The following is one such example.\\

\begin{exmp} 
Let $\mathbb{N}=\displaystyle{\bigcup_{j\in \mathbb{N}}}\Delta_j$ be a decomposition of $\mathbb{N}$ such that $\Delta_j$ is infinite and $\Delta_i \cap\Delta_j=\phi$ if $i\neq j$. Let $I$ be the class of all those subsets of $\mathbb{N}$ that intersects with only finite numbers of $\Delta_i$'s. Then $I$ is a non-trivial admissible ideal of $\mathbb{N}$. Now let us consider $X=\mathbb{R}$, $E=\mathbb{R}^2$, with usual norm and $d(x,y )=( |x - y| , |x - y|)$, $P=\left\{ (x , y) : x,y \geq 0\right\}$. Then $(X,d)$ is a cone metric space. Now let us construct a sequence as follows: $y_i = n$ if $i \in \Delta_n$. Now for any $c\in E$ with $\theta_E<< c$ the set $\left\{ i \in \mathbb{N} : c - d(0,y_i)\in \: \text{Int P}\right\}$ is contained in the union of finite numbers of $\Delta_j$'s and hence belongs to $I$. So $\left\{y_i\right\}_{i\in \mathbb{N}}$ is $I$-divergent. Next we shall show that $\left\{y_i\right\}_{i\in \mathbb{N}}$ is not $I^*$-divergent. If possible assume that $\left\{y_i\right\}_{i\in \mathbb{N}}$ is $I^*$-divergent. Then by definition there is a $M\in F(I)$ such that  $\left\{y_i\right\}_{i\in M}$ is divergent. Since $\mathbb{N}\setminus M\in I$ so there exists a $l \in \mathbb{N}$ such that $\mathbb{N}\setminus M \subset \Delta_1 \cup\Delta_2\cup\cdots\cup\Delta_l$. [For, if no such $l$ is found then $\mathbb{N}\setminus M$ intersects  infinitely many $\Delta_j$ which contradicts the fact that $\mathbb{N}\setminus M \in I $]. But then $\Delta_i \subset M \: \forall i > l$. In particular $\Delta_{l+1}\subset M$. But this implies that  $\left\{y_i\right\}_{i\in \Delta_{l+1}}$ is a constant subsequence of  $\left\{y_i\right\}_{i\in M}$ which converges to $l+1$. This contradicts the fact that $\left\{y_i\right\}_{i\in M}$ is divergent.
\end{exmp}
 

\begin{thm}\label{th1} If $I$ is an admissible ideal with property (AP) then any $I$-divergent sequence $\left\{x_n\right\}_{n\in \mathbb{N}}$ in $X$ is $I^*$-divergent if the following additional condition (C) holds for the cone P of a real Banach space $E$,\\
 (C) : For any $c\in E$ and $\eta\in \text{Int P}$ there exists a natural number $K$ such that $c<< K\eta$.
\end{thm}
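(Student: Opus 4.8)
The plan is to run the classical (AP)-argument — the one that collapses $I$-convergence into $I^*$-convergence — in the divergence setting, with condition (C) playing the role that the cofinal sequence $\{1/k\}_k$ plays in a metric space; here that role is taken by $\{k\eta\}_k$ for a fixed interior point $\eta$. So let $x\in X$ witness the $I$-divergence of $\{x_n\}$, fix $\eta\in\text{Int P}$ (possible since $\text{Int P}\neq\phi$), and for $k\in\mathbb N$ put $A_k:=A(x,k\eta)=\left\{n\in\mathbb N: k\eta-d(x,x_n)\in\text{Int P}\right\}$. By the first of the two cone-theorems quoted from \cite{5} we have $\theta_E\ll k\eta$ for every $k$, so $I$-divergence gives $A_k\in I$. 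Using the second theorem of \cite{5} (an element of $P$ plus an element of $\text{Int P}$ lies in $\text{Int P}$) I would then check the chain $A_1\subseteq A_2\subseteq\cdots$: for $k<m$ and $n\in A_k$, $m\eta-d(x,x_n)=(m-k)\eta+\bigl(k\eta-d(x,x_n)\bigr)$ is a sum of an element of $P$ and an element of $\text{Int P}$, hence lies in $\text{Int P}$, i.e. $n\in A_m$.

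Next I would disjointify: set $D_1:=A_1$ and $D_k:=A_k\setminus A_{k-1}$ for $k\ge2$. These are pairwise disjoint and each lies in $I$ (as a subset of $A_k$), so property (AP) produces a family $\{B_k\}$ with $D_k\Delta B_k$ finite for every $k$ and $B:=\bigcup_k B_k\in I$. Put $M:=\mathbb N\setminus B\in F(I)$. The remaining task is to show that $\{x_n\}_{n\in M}$ is divergent with the same witness $x$ — which is precisely the statement that $\{x_n\}$ is $I^*$-divergent, the cone-metric analogue of the corresponding result of \cite{c}.

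For the divergence of $\{x_n\}_{n\in M}$, take $c\in E$ with $\theta_E\ll c$ and, using condition (C), pick $K\in\mathbb N$ with $c\ll K\eta$. Since $\bigcup_{j=1}^{K}D_j=A_K$ and each $D_j\Delta B_j$ is finite, $A_K\Delta\bigl(\bigcup_{j=1}^{K}B_j\bigr)$ is finite, so in particular $A_K\setminus\bigcup_{j=1}^{K}B_j$ is finite; let $n_0$ exceed every element of it. For $n\in M$ with $n\ge n_0$ we have $n\notin B\supseteq\bigcup_{j=1}^{K}B_j$ and $n$ is too large to lie in $A_K\setminus\bigcup_{j=1}^{K}B_j$, hence $n\notin A_K$, i.e. $K\eta-d(x,x_n)\notin\text{Int P}$. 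Combining this with $c\ll K\eta$ one should then conclude $d(x,x_n)-c\in\text{Int P}$; as $c$ was arbitrary and $n_0$ depends only on $c$, this is exactly divergence of $\{x_n\}_{n\in M}$, so we are done.

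I expect the main obstacle to be precisely that last implication: passing from ``$n$ eventually leaves $A_K$'' (i.e. $d(x,x_n)$ is not $\ll K\eta$) to the strictly stronger ``$c\ll d(x,x_n)$'' for the prescribed $c$. In a totally ordered range this is immediate from $c\ll K\eta$, but in a general cone the partial order forces some care, and this is exactly the point at which condition (C) — an order-unit type hypothesis on $\eta$ — must be brought to bear, in tandem with $\text{Int P}+P\subseteq\text{Int P}$ and the elementary fact that $\theta_E\notin\text{Int P}$. By contrast, the monotonicity of the $A_k$, the disjointification, and the (AP) bookkeeping are routine.
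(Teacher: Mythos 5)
Your construction matches the paper's up to the decisive final step, and it is exactly there that your argument has a genuine gap --- one you candidly flag as ``the main obstacle'' but do not close, and which in fact cannot be closed in the form you state it. You take $A_k=\left\{n\in\mathbb{N}: d(x,x_n)<<k\eta\right\}$, disjointify by $D_k=A_k\setminus A_{k-1}$, and after the (AP) bookkeeping you arrive at: for $n\in M$ with $n\geq n_0$, $K\eta-d(x,x_n)\notin\text{Int P}$. From this together with $c<<K\eta$ you want to conclude $d(x,x_n)-c\in\text{Int P}$. That implication is false for a general cone: in $E=\mathbb{R}^2$ with $P$ the closed first quadrant, take $K\eta=(10,10)$, $c=(1,1)$ and $d(x,x_n)=(0,1000)$; then $K\eta-d(x,x_n)=(10,-990)\notin\text{Int P}$ and $c<<K\eta$, yet $d(x,x_n)-c=(-1,999)\notin\text{Int P}$. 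The failure of the upper bound $d(x,x_n)<<K\eta$ carries no lower-bound information in a partial order, and condition (C) has already been spent on choosing $K$; neither it nor the facts $\text{Int P}+P\subseteq\text{Int P}$ and $\theta_E\notin\text{Int P}$ can bridge this.

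The paper's proof is built precisely so as to have a lower bound available at this point: its cells are $A_k=\left\{n\in\mathbb{N}:(k-1)\eta\leq d(x,x_n)<<k\eta\right\}$, so that exclusion from $A_1\cup\cdots\cup A_k$ is read off as $d(x,x_n)\geq k\eta$, whence $d(x,x_n)-c=(d(x,x_n)-k\eta)+(k\eta-c)\in P+\text{Int P}\subseteq\text{Int P}$, i.e. $d(x,x_n)>>c$. (That reading itself tacitly assumes every $n$ lies in some $A_j$, a comparability of $d(x,x_n)$ with the multiples of $\eta$ which a general cone need not supply --- but that is the paper's route, and it is the lower-bound half of the definition of $A_k$ that does the work your version discards.) Your monotonicity check, your disjointification, and your (AP) and finiteness bookkeeping are all correct and somewhat cleaner than the paper's, but to complete the argument you would have to replace $D_k=A_k\setminus A_{k-1}$ by cells carrying an explicit condition of the form $(k-1)\eta\leq d(x,x_n)$, or otherwise show that $n\notin A_K$ forces $c<<d(x,x_n)$ --- which, as the counterexample shows, it does not.
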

\begin{proof}
First suppose that $I$ satisfies the condition (AP). Since  $\left\{x_n\right\}_{n\in \mathbb{N}}$ is $I$-divergent, so there exists an element $x\in X$ such that for any $c\in E$ with $\theta_E <<c$,  $A(x,c)=\left\{n\in \mathbb{N} : c - d(x,x_n)\in \: \text{Int P}\right\}\in I$ i,e for any $c\in \text{Int P}$, $A(x,c)=\left\{n\in \mathbb{N} : d(x,x_n) << c\right\}\in I$. Now let us choose a $\eta\in E$ with $\theta_E<< \eta$. For all $k\geq 2$, let\begin{align*}&A_1 = \left\{n\in \mathbb{N} : d(x,x_n) << \eta\right\}\\
&A_2 = \left\{n\in \mathbb{N} : \eta \leq d(x,x_n) << 2\eta\right\}\\
&A_3 = \left\{ n\in \mathbb{N} : 2\eta \leq d(x,x_n)<< 3\eta\right\}\\
&\vdots\\
&A_k = \left\{ n\in \mathbb{N} : (k - 1)\eta \leq d(x,x_n)<< k\eta\right\}.
\end{align*} \\
Since for any $k\geq 2$, $k\eta\in \text{Int P}$ as $\eta\in \text{Int P}$, it follows that $\left\{n\in\mathbb{N} : d(x,x_n) << k\eta \right\}
\in I$. Since $A_k \subset \left\{n\in\mathbb{N} : d(x,x_n) << k\eta \right\}$, $A_k\in I$ \: $\forall \: k\geq 2$; also $A_1\in I$. Now suppose $A_i \cap A_j \neq \phi$. Without loss of generality suppose $i< j$, then $i+1\leq j$. If $j= i + 1$ then $A_i=\left\{ n\in \mathbb{N}: (i - 1) \eta \leq d(x,x_n) << i\eta\right\}$ and $A_j = A_{i+1} = \left\{ n \in \mathbb{N}: i\eta \leq d(x,x_n)<< (i+1)\eta\right\}$. Let $m\in A_i\cap A_j$ then $d(x,x_m)<< i\eta$ and $i\eta\leq d(x,x_m)$ $\Rightarrow$ $i\eta - d(x,x_m) \in \text{ Int P}$ and $d(x,x_m) - i\eta \in \text{P}$ $\Rightarrow$ $\left\{( i\eta - d(x,x_m)) + (d(x,x_m) - i\eta)  \right\}\in \text{Int P}$ $\Rightarrow$ $\theta_E \in \text{Int P}$, a contradiction. Again if $i + 1 < j$ then $(j - i- 1)> 0$, so $\eta(j - i -1)\in \text{Int P}$ i,e $(j - 1)\eta - i\eta \in \text{Int P}$ and hence $i\eta << (j-1)\eta$. Let $m\in A_i \cap A_j$. Now $m\in A_i$ $\Rightarrow$ $d(x, x_m) << i\eta$ which implies that $d(x,x_m) << i\eta << (j-1)\eta $ $\Rightarrow$ $ d(x,x_m) << (j -1)\eta$. Again $m\in A_j$ $\Rightarrow$ $(j - 1)\eta \leq d(x,x_m)$  $\Rightarrow$ $d(x,x_m) - (j-1)\eta \in \text{P}$. Hence $\left\{ ((j -1 )\eta - d(x,x_m)) + ( d(x,x_m) - (j -1 )\eta)\right\}=\theta_E\in \text{Int P}$, a contradiction. Thus we get a collection of mutual disjoint sets $\left\{A_i\right\}_{i\in \mathbb{N}}$ with $A_i\in I$ for all $i\in \mathbb{N}$. By the condition (AP) there exists a family of sets $\left\{B_i\right\}_{i\in \mathbb{N}}$ such that $A_i\Delta B_i$ is finite for all $i$'s and $B=\displaystyle{\bigcup_{i\in\mathbb{N}}}B_i\in I$. Let $M=\mathbb{N}\setminus B $ then $M \in F(I)$. Let us take any $c\in E$ with $\theta_E<< c$. Since the cone P satisfied the condition (C), choose $k\in \mathbb{N}$ such that $c<< k\eta$. Then $ \left\{n\in \mathbb{N}: d(x,x_n) << c\right\} \subset A_1\cup A_2\cup \cdots \cup A_k$. Since $A_i \Delta B_i$ is finite, so there exists $n_0\in \mathbb{N}$ such that $ (\displaystyle{\bigcup_{i=1}^{k}} B_i) \cap \left\{ n\in \mathbb{N} : n\geq n_0\right\} =(\displaystyle{\bigcup_{i=1}^{k}}A_i) \cap\left\{n\in\mathbb{N} : n\geq n_0\right\}$. Clearly if $n\geq n_0$ and $n\in M$ then $ n\not\in \displaystyle{\bigcup_{i=1}^{k}}B_i \Rightarrow n\not\in  \displaystyle{\bigcup _{i=1}^{k}}A_i$. Therefore $d(x,x_n)\geq k\eta>>c$. This implies $d(x,x_n)>>c$. Thus  $\left\{x_n\right\}_{n\in M}$ is $I^*$-divergent. 
\end{proof}
\begin{note}
Note that  the condition (C) stated in the theorem \ref{th1} is precisely the Archimedean condition for $E$ with cone P, also note that there are Banach space and corresponding cones where this additional condition (C) holds as shown in the following example.
\end{note}
\begin{exmp}
Let $E=\mathbb{R}^2$, equipped with usual norm. Let $\text{P}=\left\{ (x,y) : x,y \geq 0\right\}$. Let $y=(y_1, y_2)$ be any element in $E$ and $x=(x_1, x_2)\in \text{Int P}$. Let $r_1=\max\left\{ |y_1|, |y_2|\right\} $ and $r_2=\min\left\{x_1, x_2\right\}$. Therefore $r_2 > 0$ and $x_1 \geq r_2$ and $x_2\geq r_2$. Also $|y_1| \leq r_1$, $|y_2| \leq r_1$. So $y_1 \leq r_1$ and $y_2 \leq r_1$. Choose an natural number $n$ by the Archimedean property of $\mathbb{R}$, such that $n r_2 > r_1$. So $n x_1\geq n r_2 > r_1 \geq y_1$ and $n x_2 \geq n r_2 >r_1 \geq y_2$. So $n (x_1, x_2) - (y_1, y_2) = (n x_1 - y_1 , n x_2 - y_2) \in \text{Int P}$, since $n x_1 - y_1 > 0$ and $n x_2 - y_2> 0$. Hence $n (x_1, x_2) >> (y_1, y_2)$.
\end{exmp}
\begin{thm}\label{th3} Let $(X,d)$ be a cone metric space containing at least one divergent sequence. If every $I$-divergent sequence  $\left\{y_n\right\}_{n\in \mathbb{N}}$ is $I^*$-divergent then $I$ satisfies the condition (AP).
\end{thm}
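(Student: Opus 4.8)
The plan is to verify condition (AP) for $I$ directly, using the standard device (adapted here from the $I$/$I^*$-convergence setting to the divergence setting): start from an arbitrary countable family of pairwise disjoint sets $\{A_j\}_{j\in\mathbb{N}}$ belonging to $I$, build from it a single $I$-divergent sequence $\{y_n\}_{n\in\mathbb{N}}$ in $X$, apply the hypothesis to obtain a set $M\in F(I)$ along which $\{y_n\}$ is (properly) divergent, and then read off from $M$ the family $\{B_j\}$ required in the definition of (AP). If only finitely many of the $A_j$ are non-empty the conclusion is immediate (take $B_j=A_j$, since a finite union of members of $I$ lies in $I$), so the content is in the general case.

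For the construction I would invoke the standing hypothesis that $X$ contains at least one divergent sequence, fix such a sequence $\{p_n\}$ together with a point $w\in X$ for which $d(w,p_n)\to\infty$ in the cone sense of the divergence definition above, and set $y_n=p_j$ whenever $n\in A_j$ for some $j$, and $y_n=p_n$ for $n\notin\bigcup_j A_j$; this is well defined because the $A_j$ are pairwise disjoint. I would then check that $\{y_n\}$ is $I$-divergent with center $w$: given $c\in E$ with $\theta_E<<c$, since $d(w,p_j)\to\infty$ there is a \emph{single} threshold $J=J(c)$ with $d(w,p_j)-c\in\text{Int P}$ for all $j\geq J$, and then $c-d(w,p_j)\notin\text{Int P}$ for such $j$ (otherwise $\theta_E\in\text{Int P}$); similarly $c-d(w,p_n)\in\text{Int P}$ holds for only finitely many $n$. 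Hence the set $A(w,c)=\{n\in\mathbb{N}:c-d(w,y_n)\in\text{Int P}\}$ is contained in $\big(\bigcup_{j<J}A_j\big)$ together with a finite set, and therefore lies in $I$; so $\{y_n\}$ is $I$-divergent.

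Next I would apply the hypothesis to this $\{y_n\}$, obtaining $M\in F(I)$ (so $\mathbb{N}\setminus M\in I$) with $\{y_n\}_{n\in M}$ divergent. Since $\{y_n\}$ is the constant $p_j$ on each block $A_j$, if $A_j\cap M$ were infinite then $\{y_n\}_{n\in A_j\cap M}$ would be a convergent (indeed constant) subsequence of the divergent sequence $\{y_n\}_{n\in M}$, contradicting the theorem proved above that a divergent sequence in a cone metric space admits no convergent subsequence; hence $A_j\cap M$ is finite for every $j$. Putting $B=\mathbb{N}\setminus M\in I$ and $B_j=A_j\cap B$, one has $B_j\subseteq B\in I$, the symmetric difference $A_j\Delta B_j=A_j\cap M$ is finite for every $j$, and $\bigcup_j B_j\subseteq B\in I$. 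This is precisely condition (AP) for the family $\{A_j\}$, and since the family was arbitrary, $I$ has (AP).

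The main obstacle is Step~1: one must arrange that for \emph{every} $c$ the good set $A(w,c)$ of a single $I$-divergent sequence is, up to a finite error, a \emph{finite} union of the blocks $A_j$, so that it lands in $I$ rather than being an uncontrolled countable union. The device that makes this work is to assign the \emph{same} constant value $p_j$ to the entire block $A_j$, so that one threshold $J(c)$ — depending only on $c$ — simultaneously removes all blocks with index $\geq J(c)$. Everything else is routine manipulation of the cone order ($\theta_E\notin\text{Int P}$ and the closure of $\text{Int P}$ under addition) already exploited earlier in the paper.
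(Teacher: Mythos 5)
Your proposal is correct and follows essentially the same route as the paper's own proof: the same block-constant sequence $y_n=p_j$ for $n\in A_j$ (and $y_n=p_n$ otherwise), the same verification that $A(w,c)$ lies in a finite union of blocks plus a finite set, and the same extraction of $B_j=A_j\cap(\mathbb{N}\setminus M)$ with $A_j\Delta B_j=A_j\cap M$ finite via the no-convergent-subsequence theorem. Your write-up is in fact slightly more careful than the paper's at the step showing $c-d(w,p_j)\notin\text{Int P}$ for large $j$.
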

\begin{proof}
Let $\left\{x_n\right\}_{n\in \mathbb{N}}$ be a divergent sequence in $X$. Then there exists an element $x\in X$ such that for any $c\in E$ with $\theta_E<<c$ there exists $k\in \mathbb{N}$ such that $d(x,x_n) - c\: \in \text{Int P}$ for all $n\geq k$. Suppose $\left\{A_i : i=1,2,3,\cdots\right\}$ is a sequence of mutually disjoint non-empty sets from $I$. Define a sequence $\left\{y_n\right\}_{n\in \mathbb{N}}$ as follows: $y_n = x_j $ if $n\in A_j$ and $y_n = x_n$ if $n\not\in A_j$ for any $j\in \mathbb{N}$. Let $c\in E$ be any element such that $\theta_E<< c$. Now $A(x,c)=\left\{n\in \mathbb{N} : d(x,y_n)<< c\right\} \subset A_1\cup A_2\cup A_3\cup \cdots \cup A_k \cup\left\{ 1,2,\cdots,k\right\}\in I$. So $\left\{y_n\right\}_{n\in \mathbb{N}}$ is $I$-divergent. By our assumption $\left\{y_n\right\}_{n\in \mathbb{N}}$ is $I^*$-divergent. So there exists $M\subset \mathbb{N}$ such that $M\in F(I)$ and $\left\{y_n\right\}_{n\in M}$ is divergent. Let $B=\mathbb{N}\setminus M$. Then $B\in I$. Put $B_j=A_j \cap B$ for all $j\in \mathbb{N}$. Since $\displaystyle{\bigcup_{j\in \mathbb{N}}}B_j \subset B$; $\displaystyle{\bigcup_{j\in \mathbb{N}}}B_j \in I$. Then $A_j\cap M$ is a finite set. For, if possible let $A_j \cap M$ is not finite set, then $M$ must contain an infinite sequence of elements $\left\{m_k\right\}$. So $y_{m_k}=x_j$ for all $k\in \mathbb{N}$, which forms a convergent sub-sequence of  $\left\{y_n\right\}_{n\in M}$. But this contradicts the fact that  $\left\{y_n\right\}_{n\in M}$ is divergent. Hence $A_i\Delta B_i=A_i\setminus B_i =A_i \cap B_i^{c}=  A_i \cap (M\cup A_i^{c})=A_i \cap M$, which is a finite subset of $\mathbb{N}$. This proves that $I$ satisfies the condition (AP).
\end{proof}
\begin{cor} 
If the condition (C) stated in the theorem \ref{th1} holds for a real Banach space and its corresponding cone. Then for an admissible ideal $I$ of $\mathbb{N}$ a $I$-divergent sequence is $I^*$-divergent if and only if $I$ satisfies the condition (AP).    
\end{cor}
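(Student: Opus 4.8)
The plan is to obtain the corollary by simply combining Theorems \ref{th1} and \ref{th3}, which between them already supply the two implications of the asserted equivalence; essentially no new argument is needed, only a careful bookkeeping of hypotheses.

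First I would dispatch the ``if'' direction. Assume $I$ is an admissible ideal with property (AP). By hypothesis the cone $\text{P}$ of the real Banach space $E$ satisfies condition (C), so Theorem \ref{th1} applies word for word and yields that every $I$-divergent sequence in $X$ is $I^*$-divergent. This half is where condition (C) is genuinely used — it is exactly what allows one to pick $k\in\mathbb{N}$ with $c\ll k\eta$ and thereby force $d(x,x_n)\geq k\eta\gg c$ along the set $M\in F(I)$ — and it requires no further work beyond quoting Theorem \ref{th1}.

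Next I would handle the ``only if'' direction. Assume every $I$-divergent sequence in $X$ is $I^*$-divergent. To conclude that $I$ has property (AP) I would invoke Theorem \ref{th3}; that theorem requires the ambient cone metric space $(X,d)$ to contain at least one divergent sequence, so I would carry this over as a standing hypothesis (it is precisely the hypothesis of Theorem \ref{th3}, and it is automatically available in the concrete cone metric spaces one has in mind, e.g. $x_n=(n,n)$ in the space $\mathbb{R}^2$ from the preceding examples). Granting it, Theorem \ref{th3} gives directly that $I$ satisfies (AP). Note that this direction makes no use of condition (C).

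The one point that needs care — the only real ``obstacle'' here — is this mismatch of hypotheses: Theorem \ref{th3} presupposes that $X$ admits a divergent sequence, while the corollary as phrased does not repeat that assumption. In writing the proof I would therefore either add the clause ``let $(X,d)$ be a cone metric space containing at least one divergent sequence'' to the corollary's statement, or point out that in the setting of interest such a sequence exists, so that Theorem \ref{th3} is legitimately applicable. Once that is pinned down the proof is immediate: the forward implication is Theorem \ref{th1} and the backward implication is Theorem \ref{th3}.
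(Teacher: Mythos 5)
Your proposal is correct and is exactly the paper's argument: the paper's proof is the single line ``Follows from the theorem \ref{th1} and theorem \ref{th3}.'' Your added observation that Theorem \ref{th3} requires $(X,d)$ to contain a divergent sequence, a hypothesis the corollary omits, is a fair and worthwhile point of care, but it does not change the route.
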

\begin{proof}
Follows from the theorem \ref{th1} and theorem \ref{th3}.
\end{proof}
\section{The decomposition theorem}\label{sec}
In this section we prove a decomposition theorem for $I$-convergent sequence in cone normed spaces.
\begin{thm}\label{thDe}
Let $(X, \norm{.}_{\text{P}})$ be a cone normed space, P be a normal cone and $(X,d)$ be a cone metric space where $d$ is the cone metric induced by the cone norm $\norm{.}_\text{P}$. Also let $x=(x_n)$ be a sequence in $X$ and $I\subset 2^\mathbb{N}$ be an admissible ideal with the property (AP). Then the following condition are equivalent:\\
(a) $I\text-\lim x_n=\xi $\\
(b) There exists sequences $y=(y_n)\in X$ and $z=(z_n)$ in $X$ such that $x= y + z $, $d(y_n,\xi) \to \theta_E$ as $n\to\infty$ and $\text{supp z}\in I$, where $\text{supp z}=\left\{n\in \mathbb{N} : z_n\neq \theta_X \right\}$, where $\theta_X$ is the zero element of $X$.
\end{thm}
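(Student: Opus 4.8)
The plan is to prove the two implications separately, $(b)\Rightarrow(a)$ being routine (it does not use (AP)) and $(a)\Rightarrow(b)$ carrying the content. For $(b)\Rightarrow(a)$: suppose $x=y+z$ with $\mathrm{supp}\,z\in I$ and $d(y_n,\xi)\to\theta_E$. For every $n\notin\mathrm{supp}\,z$ one has $z_n=\theta_X$, hence $x_n=y_n+\theta_X=y_n$ and $d(x_n,\xi)=d(y_n,\xi)$. Fix $c\in E$ with $\theta_E\ll c$; since $P$ is normal, $d(y_n,\xi)\to\theta_E$ is the same as $y_n\to\xi$ in the cone metric (the Lemma from \cite{k}), so there is $N$ with $d(y_n,\xi)\ll c$ for all $n\ge N$. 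Then $\{n\in\mathbb{N}:c-d(x_n,\xi)\notin\mathrm{Int}\,P\}\subseteq\mathrm{supp}\,z\cup\{1,\dots,N\}$, which belongs to $I$ because $I$ is admissible and $\mathrm{supp}\,z\in I$; hence $I\text-\lim x_n=\xi$.

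For $(a)\Rightarrow(b)$, the first step is to produce from $I\text-\lim x_n=\xi$ a set $M=\{m_1<m_2<\cdots\}\in F(I)$ along which the sequence converges in the ordinary sense, $\lim_{k\to\infty}x_{m_k}=\xi$. Since $I$ has (AP) this is available: one may cite the fact (recalled in Section~2, via first countability of cone metric spaces) that $I$- and $I^*$-convergence coincide under (AP), or argue directly as in Theorem~\ref{th1}. For the direct version, fix $\eta\in\mathrm{Int}\,P$; then $\eta/k\in\mathrm{Int}\,P$ for each $k$, the sets $A_k=\{n\in\mathbb{N}:\eta/k-d(x_n,\xi)\notin\mathrm{Int}\,P\}$ lie in $I$ by the definition of $I$-convergence, and $A_1\subseteq A_2\subseteq\cdots$ since $\eta/(k+1)\ll\eta/k$. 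Applying (AP) to the disjoint family $A_1,\,A_2\setminus A_1,\,A_3\setminus A_2,\,\dots$ produces sets $B_k$, each differing from $A_k\setminus A_{k-1}$ by a finite set, with $B=\bigcup_k B_k\in I$; hence $A_k$ differs from $(\bigcup_{i\le k}B_i)$ by a finite set for every $k$. Put $M=\mathbb{N}\setminus B\in F(I)$. Given $c\gg\theta_E$, openness of $\mathrm{Int}\,P$ provides a $k$ with $\eta/k\ll c$, and for all $n\in M$ beyond the finitely many indices where $A_k$ and $\bigcup_{i\le k}B_i$ disagree one gets $n\notin A_k$, i.e. $d(x_n,\xi)\ll\eta/k\ll c$; this is exactly $\lim_{k\to\infty}x_{m_k}=\xi$. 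With $M$ chosen, set $y_n=x_n$ for $n\in M$, $y_n=\xi$ for $n\notin M$, and $z_n=x_n-y_n$ (legitimate because $X$ is a vector space). Then $x=y+z$ by construction; $z_n=\theta_X$ for each $n\in M$, so $\mathrm{supp}\,z\subseteq\mathbb{N}\setminus M\in I$ and $\mathrm{supp}\,z\in I$ by heredity; and $d(y_n,\xi)=\theta_E$ off $M$ while on $M$ it equals $d(x_n,\xi)$, which tends to $\theta_E$ by the step just proved together with normality of $P$ — so $d(y_n,\xi)\to\theta_E$.

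The work is largely organizational, but the points that deserve care — and that explain why the statement is set in a cone normed space with a normal cone rather than being a verbatim transcript of the linear-metric proof — are these. Forming $z=x-y$ and recombining $x=y+z$ genuinely uses the vector-space operations of $X$, so a bare cone metric would not suffice. And at several places one must pass between the intrinsic $c\gg\theta_E$ description of convergence in $X$ and honest norm-convergence of $d(\cdot,\cdot)$ in $E$; this is licensed precisely by normality of $P$ through the Lemma. The one genuine difficulty is the bookkeeping in the direct construction of $M$: the finitely many indices contributed by each symmetric difference $A_k\Delta B_k$ must be absorbed uniformly over all the scales $\eta/k$ without destroying the eventual domination $d(x_n,\xi)\ll c$ on $M$, and this is handled exactly as in Theorem~\ref{th1}, by fixing $k$ first for the given $c$ and only then choosing the threshold beyond which to work.
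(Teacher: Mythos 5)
Your proof is correct and follows essentially the same route as the paper: under (AP) extract a set $M\in F(I)$ along which $x_{m_k}\to\xi$, put $y_n=x_n$ on $M$ and $y_n=\xi$ off $M$, take $z=x-y$, and for the converse note that $x_n=y_n$ off $\mathrm{supp}\,z$. The only difference is one of self-containment: you spell out the $I\Rightarrow I^*$ extraction (via the nested sets $A_k$ at scales $\eta/k$) and the $(b)\Rightarrow(a)$ set inclusion explicitly, whereas the paper simply cites the known equivalence of $I$- and $I^*$-convergence under (AP) for both directions --- and, as the paper's own Remark \ref{rem2} concedes, (AP) is not actually needed for $(b)\Rightarrow(a)$, a point your argument makes visible.
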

\begin{proof}
Let $I\text-\displaystyle{\lim_{n\to \infty}}x_n=\xi$. Since $I$ has the property (AP), we conclude that there exists a set $M\in F(I)$, $M=\left\{m_1<m_2<\cdots<m_k <\cdots\right\}$ such that $\displaystyle{\lim_{k \to\infty}}x_{m_k}=\xi$. Now we define a sequence $y=(y_n)$ in X as follows:
 \begin{equation}\label{eq3}
y_n=
\begin{cases}x_n       \hspace{2cm}  \: n\in M.\\
\xi                     \hspace{2cm}  \:n\in \mathbb{N}\setminus M.
\end{cases}
\end{equation}
Thus we can see that $\displaystyle{\lim_{n\to \infty}}y_n=\xi$. Since P is normal so $d(y_n,\xi)\to \theta_E$ as $n\to \infty$. Further put $z_n= x_n - y_n$, $n\in\mathbb{N}$. Since $\left\{k\in \mathbb{N} : x_k\neq y_k\right\} \subset \mathbb{N}\setminus M \in I$. We have $\left\{k\in \mathbb{N} : z_k\neq \theta_X \right\} \in I$ . So it follows that $\text{supp z}\in I$. Also $x=y + z$.\\

Conversely, Suppose there exists two sequence $y=(y_n)\in X $ and $z=(z_n)\in X$ such that $x= y + z$, $d(y_n, \xi )\to \theta_E$ as $n\to \infty$ and $\text{supp z}\in I$. We will prove that $I\text-\displaystyle{\lim_{n \to \infty}} x_n=\xi$. Define a subset $M$ of $\mathbb{N}$ by $M=\left\{m\in \mathbb{N}: z_m\neq \theta_X\right\}$. Now since $\text{supp z}=\left\{m\in \mathbb{N}: z_m\neq \theta_X\right\}\in I$, we have $M\in F(I)$; hence $x_n=y_n$ if $n\in M$. Therefore there exists a set $M=\left\{m_1<m_2<\cdots\right\}\text{,}\: M\in F(I)$ such that $\displaystyle{\lim_{k\to\infty}}x_{m_k}=\xi$. Now as $I$ satisfies the condition (AP), therefore we can say that $I\text-\displaystyle{\lim_{n\to\infty}}x_n=\xi$. Hence the proof is complete.
\end{proof}
\begin{lem}\label{lem1}
Let $(X,\norm{.}_{\text{P}})$ be a cone normed space, P be a normal cone and $(X,d)$ be a cone metric space where $d$ is the cone metric induced by the cone norm $\norm{.}_\text{P}$. Then if $I\text-\lim x_n=\xi$, $I\text-\lim y_n=\eta$ then $I\text-\lim (x_n + y_n)=\xi + \eta$
\end{lem}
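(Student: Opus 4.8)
The plan is to argue straight from the definition of $I$-convergence in a cone metric space, reducing the statement to three elementary facts that are already available: the triangle inequality for the cone norm, which gives $d(x_n+y_n,\xi+\eta)=\norm{(x_n-\xi)+(y_n-\eta)}_{\text{P}}\le d(x_n,\xi)+d(y_n,\eta)$; the scaling theorem of \cite{5}, by which $\theta_E<<c$ implies $\theta_E<<\tfrac12 c$; and the theorem of \cite{5} that $u\in\text{P}$ and $v\in\text{Int P}$ imply $u+v\in\text{Int P}$ (so in particular $\text{Int P}+\text{Int P}\subseteq\text{Int P}$).

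First I would fix an arbitrary $c\in E$ with $\theta_E<<c$ and set $c'=\tfrac12 c$, so that $\theta_E<<c'$ and $c'+c'=c$. Applying the hypothesis $I\text-\lim x_n=\xi$ to $c'$ yields $A_1=\left\{n\in\mathbb N:c'-d(x_n,\xi)\notin\text{Int P}\right\}\in I$, and applying $I\text-\lim y_n=\eta$ to $c'$ yields $A_2=\left\{n\in\mathbb N:c'-d(y_n,\eta)\notin\text{Int P}\right\}\in I$; hence $A_1\cup A_2\in I$. For every $n\notin A_1\cup A_2$ we then have $d(x_n,\xi)<<c'$ and $d(y_n,\eta)<<c'$, and adding the two memberships of $\text{Int P}$ gives $c-\bigl(d(x_n,\xi)+d(y_n,\eta)\bigr)=\bigl(c'-d(x_n,\xi)\bigr)+\bigl(c'-d(y_n,\eta)\bigr)\in\text{Int P}$, i.e. $d(x_n,\xi)+d(y_n,\eta)<<c$.

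Next I would combine this with the cone-norm triangle inequality: writing $a=d(x_n+y_n,\xi+\eta)$ and $b=d(x_n,\xi)+d(y_n,\eta)$, we have $a\le b$ and $b<<c$, so $c-a=(c-b)+(b-a)$ is a sum of an element of $\text{Int P}$ and an element of $\text{P}$, hence lies in $\text{Int P}$; that is, $d(x_n+y_n,\xi+\eta)<<c$. Therefore the set $B=\left\{n\in\mathbb N:c-d(x_n+y_n,\xi+\eta)\notin\text{Int P}\right\}$ is contained in $A_1\cup A_2$, so $B\in I$ by the heredity property of the ideal. Since $c$ was an arbitrary element with $\theta_E<<c$, this is precisely the assertion $I\text-\lim(x_n+y_n)=\xi+\eta$.

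The genuine content here is light; the step I expect to need the most care is the cone bookkeeping in the last two paragraphs, namely the two additivity manipulations for $\text{Int P}$ (the inclusion $\text{Int P}+\text{Int P}\subseteq\text{Int P}$ and the implication $a\le b<<c\Rightarrow a<<c$), both of which I would deduce from the quoted inclusion $\text{P}+\text{Int P}\subseteq\text{Int P}$. I would also note in passing that normality of $\text{P}$ is not actually used in this argument — it is present only because that is the ambient setting of this section.
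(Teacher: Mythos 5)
Your proposal is correct and follows essentially the same route as the paper's proof: split $c$ as $\tfrac{c}{2}+\tfrac{c}{2}$, use the two hypothesis sets for $\tfrac{c}{2}$, add the two $\text{Int P}$ memberships, and pass through the triangle inequality using $\text{P}+\text{Int P}\subseteq\text{Int P}$ (the paper phrases the triangle step via the induced metric's translation invariance rather than the cone norm directly, which is an immaterial difference). Your side remarks — that $\theta_E<<c$ implies $\theta_E<<\tfrac12 c$ needs the scaling fact, and that normality of P is never used — are both accurate.
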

\begin{proof}
Since $I\text-\lim x_n=\xi$ and $I\text-\lim y_n=\eta$. So for any $c\in E$ with $\theta_E<< c$ \\
$\left\{n\in \mathbb{N}: \frac{c}{2} - d(x_n,\xi) \not\in \:\text{Int P}\right\}\in I\:\text{and}\: \left\{n\in \mathbb{N}: \frac{c}{2} - d(y_n,\eta) \not \in \:\text{Int P} \right\}\in I$.\\
Now we claim that 
\begin{equation}\label{eq2}
\begin{split}
\left\{n\in \mathbb{N} : c - d(x_n + y_ n, \xi + \eta) \not \in \:\text{Int P}\right\}\subset \left\{n\in \mathbb{N}: \frac{c}{2} - d(x_n,\xi) \not\in \:\text{Int P}\right\} \\
\cup \left\{n\in \mathbb{N}: \frac{c}{2} - d(y_n,\eta) \not \in \:\text{Int P} \right\}.
\end{split}
\end{equation} For, let $m\not \in \left\{n\in \mathbb{N}: \frac{c}{2} - d(x_n,\xi) \not\in \:\text{Int P}\right\} \cup \left\{n\in \mathbb{N}: \frac{c}{2} - d(y_n,\eta) \not \in \:\text{Int P} \right\}$. Then $ \frac{c}{2} - d(x_m,\xi) \in \:\text{Int P}$ and $\frac{c}{2} - d(y_m , \eta)\in \:\text{Int P}$. So $ c - ( d(x_m, \xi) + d(y_m, \eta)) \in \:\text{Int P}$ that is $d(x_m,\xi) + d(y_m,\eta) << c$. Now, \begin{equation}\label{eq1}
d(x_m + y_m , \xi + \eta ) \leq d(x_m + y_m , x_m + \eta) + d(x_m + \eta , \xi +\eta)
\end{equation} 
Now for the cone metric $d$ induced by a cone norm on a cone normed space satisfies $d(x + c , y + c) = d(x,y)$. Thus (\ref{eq1}) implies $d(x_m + y_m , \xi +\eta)\leq d(y_m , \eta) + d(x_m , \xi)$. So we get $d(x_m + y_m , \xi + \eta)\leq d(y_m, \eta)+ d(x_m , \xi) << c$. Hence $c - d(x_m + y_m , \xi + \eta) \in \: \text{Int P}$. Thus $m\not \in \left\{n \in \mathbb{N} : c - d(x_n + y_n , \xi + \eta)\not \in \:\text{int P}\right\}$. Therefore our claim is true. Now the right hand sides of (\ref{eq2}) belongs to $I$. So left hand of (\ref{eq2}) also belongs to $I$. So, $I\text-\lim( x_n + y_n)=\xi + \eta $
\end{proof}
\begin{cor}
$I\text-\displaystyle{\lim_{n\to\infty}}x_n=\xi$ if and only if there exists $(y_n)\in X$ and $(z_n)\in X$ such that $x_n= y_n + z_n$, $d(y_n , \xi)\to \theta_E$ as $n\to\infty$ and $I\text-\displaystyle{\lim_{n\to\infty}}z_n=\theta_X$
\end{cor}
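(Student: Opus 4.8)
The plan is to obtain both implications directly from Theorem~\ref{thDe} and Lemma~\ref{lem1}, so that only one elementary observation about sequences supported on a set in $I$ is needed beyond those two results.

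For the direction $(\Rightarrow)$, I would assume $I\text-\lim_{n\to\infty}x_n=\xi$ and invoke Theorem~\ref{thDe}, which already supplies sequences $y=(y_n)$ and $z=(z_n)$ in $X$ with $x=y+z$, $d(y_n,\xi)\to\theta_E$ as $n\to\infty$, and $\text{supp z}=\left\{n\in\mathbb{N}:z_n\neq\theta_X\right\}\in I$. It then remains only to upgrade ``$\text{supp z}\in I$'' to ``$I\text-\lim z_n=\theta_X$''. For this I would fix $c\in E$ with $\theta_E<<c$ and note that for every $n\notin\text{supp z}$ we have $z_n=\theta_X$, hence $d(z_n,\theta_X)=\norm{z_n}_{\text{P}}=\theta_E$ and $c-d(z_n,\theta_X)=c\in\text{Int P}$; consequently $\left\{n\in\mathbb{N}:c-d(z_n,\theta_X)\notin\text{Int P}\right\}\subseteq\text{supp z}\in I$, which is precisely the definition of $I\text-\lim z_n=\theta_X$.

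For the direction $(\Leftarrow)$, I would assume $x_n=y_n+z_n$ with $d(y_n,\xi)\to\theta_E$ and $I\text-\lim z_n=\theta_X$. Since $I$ is admissible, ordinary convergence in a cone metric space implies $I$-convergence (the set $\left\{n:c-d(y_n,\xi)\notin\text{Int P}\right\}$ is finite, hence in $I$); this is the same fact used implicitly in the converse half of Theorem~\ref{thDe}. Thus $d(y_n,\xi)\to\theta_E$ forces $I\text-\lim y_n=\xi$. Applying Lemma~\ref{lem1} to the pair $(y_n)$, $(z_n)$ then yields $I\text-\lim(y_n+z_n)=\xi+\theta_X=\xi$, i.e. $I\text-\lim x_n=\xi$, which finishes the argument.

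I do not expect a genuine obstacle here: the only points needing a moment's care are the set inclusion in $(\Rightarrow)$ (which rests on property (i) of the cone norm) and the remark that admissibility of $I$ turns the ordinary convergence $d(y_n,\xi)\to\theta_E$ into $I$-convergence, which is exactly what licenses the use of Lemma~\ref{lem1} in $(\Leftarrow)$.
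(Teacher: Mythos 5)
Your proposal is correct and follows essentially the same route as the paper: the forward direction takes the decomposition from Theorem~\ref{thDe} and upgrades $\text{supp z}\in I$ to $I\text{-}\lim z_n=\theta_X$ via the inclusion $\left\{n\in\mathbb{N}:c-d(z_n,\theta_X)\notin\text{Int P}\right\}\subseteq\text{supp z}$, and the converse combines the fact that $d(y_n,\xi)\to\theta_E$ gives $I\text{-}\lim y_n=\xi$ with Lemma~\ref{lem1}, exactly as the paper does. No gaps.
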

\begin{proof}
Let $z_n= x_n - y_n$, where $(y_n)$ is the sequence defined by (\ref{eq3}). Then $d(y_n , \xi)\to \theta_E$ as $n\to \infty$. Also $I\text-\displaystyle{\lim_{n\to\infty}}z_n = \theta_X$; as for any $c\in E$ with $\theta_E<< c$, $\left\{n\in\mathbb{N} : c - d(z_n , \theta_X) \not \in \:\text{Int P}\right\} \subset \text{supp z} \in I$.\\
Further let $x_n= y_n + z_n $, where $d(y_n , \xi )\to \theta_E$ as $n\to \infty$ and $I\text-\displaystyle{\lim_{n\to\infty}}z_n = \theta_X$. Since $I\text-\displaystyle{\lim_{n\to\infty}}y_n =\xi$, then by lemma (\ref{lem1}) we get $\text{I}\text-\displaystyle{\lim_{n\to\infty}}x_n=\xi$
\end{proof}
\begin{rem}\label{rem2}
From the proof of the theorem (\ref{thDe}), it is clear that (b) implies (a) even if the ideal $I$ have not the property (AP), as $I^*$-convergence implies $I$-convergence. In fact, let $x_n = y_n + z_n $ ,$ d(y_n, \xi) \to \theta_E$ as $n\to\infty$ and $\text{supp z}\in I$, where $I$ is an admissible ideal which has not the property (AP). Since $\left\{n\in\mathbb{N}: c - d(z_n, \theta_X) \not \in \text{Int P}\right\} \subset \left\{n\in \mathbb{N}: z_n\neq \theta_X\right\}\in I$ for any $c\in E$ with $\theta_E<< c$; we have $I\text-\displaystyle{\lim_{n\to\infty}}z_n= \theta_X$. Hence $I\text-\displaystyle{\lim_{n\to\infty}}x_n=\xi$
\end{rem}


\begin{thebibliography}{99}

\bibitem{B} M. Balacerzak, K. Dems and A. Kowissarki, \textit{Statistical convergence and ideal convergence for sequences of functions }, \textit{J.Math. Anal. App.} \textbf{328(1)}(2007), 715-729.
\bibitem{1} Amar Kumar Banerjee and Anindya Dey, \textit{Metric spaces and complex Analysis, New Age International(P) Ltd, Publishers,} 2008.
\bibitem{2} Amar Kumar Banerjee and Apurba Banerjee, \textit{A note on $I$-convergence and $I^*$-convergence of sequences and nets in topological spaces, Mat. Vesnik } \textbf{67(3)} (2015) 212-221.
\bibitem{4}Amar Kumar Banerjee and Apurba Banerjee , \textit{ $I$-convergence classes of sequences and nets in topological spaces, Jordan Journal of Mathematics and Statistics (JJMS) } \textbf{11(1)}, 2018, pp 13-31.
\bibitem{3} Amar Kumar Banerjee and Rahul Mondal, \textit{A note on convergence of double sequences in a topological space, Mat. Vesnik} \textbf{69}, 2(2017), 144-152, June 2017.
\bibitem{5} Amar Kumar Banerjee and Rahul Mondal, \textit{Rough convergence of sequences in a cone metric space}, arXiv : 1805.10257v1
\bibitem{c} Pratulananda Das and S. K. Ghosal, \textit{Some further result on $I$-Cauchy sequences and condition (AP), Comp.Math.Appl}; \textbf{59} (2010),   2597-2600.
\bibitem{d} Pratulananda Das, P. Kostyrko, W. Wilezynski and P. Malik, \textit{$I$ and $\text{I}^*$-convergence of double sequences, Math. Slovaca,} \textbf{58}(2008), 605-620.
\bibitem{f} H. Fast, \textit{Sur la convergence statistique, Colloq. Math} \textbf{2}(1951), 241-244.
\bibitem{g} J. A. Friday, \textit{On statistical convergence, Analysis,} \textbf{5}(1985), 301-313.
\bibitem{g*} M. E. Gordji, M. Ramezani, H. Khodaei and H. Baghani, \textit{Cone Normed Spaces,} arXiv : 0912.0960v1 [math.FA]. \textbf{4Dec} 2009.
\bibitem{h} Pavel Kostyrko, Tibor \u Sal\' at, Wladyslaw Wilczy\' nski, \textit{$I$-convergence,} \textit{Real Analysis Exchange, Vol.} \textbf{26(2)}, 2000/2001, pp.- 669-686.
\bibitem{t*} J. L. Kelley, General Topology, Springer-Verlag, New York, 1955.
\bibitem{t} C. Kuratowski, \textit{Topologie I, PWN Warszawa,} 1958.
\bibitem{t**} P. Kostyrko, M. Macaj, T. Salat and M. Sleziak, \textit{$I$-Convergence and Extremal $I$-Limit Points, Math Slovaca,} \textbf{55}(2005), 443-464
\bibitem{i} B. K. Lahiri and Pratulananda Das, \textit{Further results on $I$-limit superior and $I$-limit inferior, Math, Commun,} \textbf{8}(2003), 151-156.
\bibitem{j} B. K. Lahiri and Pratulananda Das, \textit{$I$ and $I^*$-convergence in topological spaces, Math Bohemica,} \textbf{130(2)} (2005), 153-160.
\bibitem{k} H. Long-Guang and Z. Xian, \textit{Cone metric spaces and fixed point theorems of contrapositive mappings, J.Math, Anal. Appl.} \textbf{332} (2007), 1468- 1476.
\bibitem{l} M. Macaj , T. Salat, \textit{Statistical convergence of subsequences of a given sequence, Math, Bohem,} \textbf{126} (2001) 191-208.
\bibitem{m} A. Nabiev, S. Pehlivan, M. Gurdal, \textit{On $I$-Cauchy sequences, Taiwanese J.Math,} \textbf{12(2)} (2007), 569- 576.
\bibitem{n} S. K. Pal, E. Savas and H. Cakalli, \textit{$I$-convergence on cone metric spaces, Sarajevo Journal of Math.} Vol.\textbf{9(21)} (2013), 85-93.

\bibitem{p} T. \u Sal\' at, \textit{On statistically convergent sequences of real numbers, Math, Slovaca} \textbf{30} (1980), 139-150.
\bibitem{q} H. Steinhaus, \textit{Sur la convergence ordinaire et la convergence asymptotique Colloq.Math} \textbf{2} (1951), 73-74.
\bibitem{r} D. Turkoglu and M. Abuloha, \textit{Cone metric spaces and fixed point theorems in diametrically contractive mapping, Acta Mathematica Sinica, English series,} March 2010 ,Vol \textbf{26}, Issue 3, \text{pp 489-496.}
\bibitem{s} D. Turkoglo, M. Abuloha, T. Abdeljawad, \textit{Some theorems in cone metric spaces, Journal of Elesvier Science,} (18 February, 2009) Vol .\textbf{10}.
\end{thebibliography}
\end{document}